\newcommand{\e}{\varepsilon}
\newcommand{\LEE}{\mathcal{L}}
\newcommand{\R}{\mathbb{R}}
\newcommand{\C}{\mathbb{C}}
\newcommand{\Z}{\mathbb{Z}}
\newcommand{\F}{\mathcal{F}}
\theoremstyle{plain}
\newtheorem{thm}{Theorem}[section]
\newtheorem{lem}[thm]{Lemma}
\newtheorem{prop}[thm]{Proposition}
\theoremstyle{definition}
\newtheorem{defn}[thm]{Definition}
\newtheorem{exmp}[thm]{Example}
\newtheorem{pbm}[thm]{Problem}
\newtheorem{conj}[thm]{Conjecture}
\newtheorem*{rem}{Remark}
\newtheorem*{TB}{Thurston-Bennequin inequality}
\title[Reeb foliations on $S^5$ and contact $5$-manifolds]
{Reeb foliations on $S^5$ and contact $5$-manifolds
violating the Thurston-Bennequin inequality}
\author[A.~Mori]{Atsuhide MORI}
\address{Osaka city univerity advanced mathematical institute,
3-3-138 Sugimoto, Sumiyoshi-ku, Osaka~558-8585 Japan}
\email{ka-mori@ares.eonet.ne.jp}
\subjclass{Primary~57R17, Secondary~57R30, 57R20}
\keywords{Contact structure; foliation ; Milnor fibration; 
open-book.}
\begin{document}

\begin{abstract}
This article describes the following results;  
i) convergence of high dimensional contact structure to 
codimension one foliation with Reeb component,
ii) relation between $Nil$-type and $Sol$-type contact submanifolds of $S^5$,
iii) definition of convex Thurston-Bennequin inequality and
iv) generalization of Lutz twist via convex hypersurface theory.
We perform Lutz twists along $Sol$-type submanifolds of $S^5$ 
to obtain exotic contact structures violating  
the inequality. We also describe the corresponding 
modification of foliation. 
\end{abstract}

\maketitle

%%%%%%%%%%%%%%%%%%%%%%%%%%%%%%%%%%%%%%%%%%%%%%%%%%%%%%%%%%%%%%%%%%%%%%%SECT1
\section{Introduction}
The author obtained the following results and 
distributed a preprint on them in 2009. 
Since some of the results has been used and brushed-up in literatures, 
especially in \cite{Kasuya} and \cite{Massot}, 
he would like to belatedly describe them in this article. 
We omit some details according to the current necessities.
\begin{enumerate}
\item[i)] In \S2, we construct a family of contact structures of
a closed manifold of dimension greater than three which converges 
to a codimension one foliation. The family is presented 
by a quadratic curve in the space of $1$-forms contrastingly 
to the linear deformation of $3$-dimensional contact structure to
a foliation in \cite{Mori}. 
In certain cases, this result has been generalized to a deformation 
of contact structure to leafwise symplectic foliation in \cite{MoriM}.
\item[ii)] In \S3, we briefly explain a contact geometrical relation between 
$Nil$-type and $Sol$-type submanifolds of the standard $S^5$, 
which has been completed by Naohiko Kasuya via 
complex singularity theory in \cite{Kasuya}.
\item[iii)] In \S4, we describe convex hypersurface theory due to Giroux  
and add to it a relative version. 
Then we define convex Thurston-Bennequin inequality. 
It is likely that any convex hypersurface (with boundary)  
in the standard $S^{2n+1}$ will satisfy the inequality, while there exists a
non-convex hypersurface in $S^{2n+1}$ which violates the inequality. 
(We describe it in Appendix, 
which has been another unpublished manuscript of this author.) 
\item[iv)] In \S5, we describe a generalization of Lutz twist via convex
hypersurface theory. This was generalized and improved to a satisfactory form 
by Massot, Niederkr\"uger and Wendl in \cite{Massot}. 
\end{enumerate}
We notice that the above results relate to each other. Indeed, in \S 6, we 
perform Lutz twists along certain $Sol$-type submanifolds of $S^5$ and 
obtain exotic (non-fillable) contact structures 
which violate the convex Thurston-Bennequin inequality. 
We also describe the corresponding modifications of foliations. 
This provides many examples of foliations of $S^5$ 
with (generalized) Reeb components.

%%%%%%%%%%%%%%%%%%%%%%%%%%%%%%%%%%%%%%%%%%%%%%%%%%%%%%%%%%%%%%%%%%%%%%%%%%SECT2
\section{Convergence of contact structure to foliation}
We recall the definition of a supporting open-book structure in Giroux \cite{Giroux}.

\begin{defn} Let $\alpha$ be a contact form on a closed manifold $M^{2n+1}$,
and $N^{2n-1}$ a codimension two contact submanifold. Suppose that 
a tubular neighborhood of $N^{2n-1}$ is the product $N^{2n-1}\times D^2$,
and the pull-back of the angular coordinate of $D^2$ extends to 
a fiber bundle projection $\theta: M^{2n+1}\setminus N^{2n-1}\to S^1$. 
Then we say that the open-book structure defined by $\theta$ 
supports the contact structure $\ker\alpha$ if there exists a function 
$h: M^{2n+1}\to \R$ such that 
$d\theta\wedge(d(e^h\alpha))^n>0$ except on $N^{2n-1}$.
The contact form $\alpha_0=e^h\alpha$ is called an adapted contact form. 
\label{openbook}
\end{defn}  
A cooriented contact structure is defined by a contact form $\alpha$. 
If a supporting open-book structure is specified, 
we usually assume that $\alpha$ is adapted to it. 
Note that we can take the above $h$ so that 
the restriction $h|N^{2n-1}$ is arbitrary. 
Let $\rho$ denote the square of the radial coordinate of $D^2$.
Modifying $\rho$ and $\theta$ if necessary, 
we have the axisymmetric expression $\alpha=f_0(\rho)\beta+g_0(\rho)d\theta$
on $N^{2n-1}\times D^2$, where $\beta$ is 
(the pull-back of) the restriction $\alpha|N^{2n-1}$, 
$f_0(\rho)$ a positive decreasing function of $\rho$ with $f_0(0)=1$, and 
$g_0(\rho)$ a non-negative non-decreasing function of $\rho$ 
smoothly tangent to $\rho$ and $1$ respectively at $\rho=0$ and $\rho=1$. 
Our result of this section is 

\begin{thm} In the above setting for $n>1$, suppose that 
the contact submanifold $N^{2n-1}$ admits a non-zero 
closed $1$-form $\nu$ with $\nu \wedge (d\beta)^{n-1} = 0$.
Then there exists a family of contact forms 
$\{\alpha_t\}_{0\le t<1}$ on $M^{2n+1}$ 
which starts with $\alpha_0=\alpha$ and converges to a non-zero $1$-form 
$\alpha_1$ with $\alpha_1\wedge d\alpha_1=0$. 
That is, the contact structure $ker\alpha$ converges 
to a foliation defined by $\alpha_1$ at the end of an isotopic deformation.
\label{convergence}
\end{thm}

\begin{proof}
Take smooth functions $f_1(\rho)$, $g_1(\rho)$, $h(\rho)$ and $e(\rho)$ of 
$\rho\in [0,1]$ such that
\begin{enumerate}
\item[i)] $f_1= 1$ near $\rho=0$,\quad $f_1= 0$ on $[1/2,1]$,\quad
$f_1'\le 0$ on $[0,1]$,
\item[ii)] $g_1= 1$ near $\rho=1$,\quad $g_1= 0$ on $[0,1/2]$,\quad 
$g_1'\ge 0$ on $[0,1]$,
\item[iii)] $h= 1$ on $[0,1/2]$,\quad $h= 0$ near $\rho=1$,
\item[iv)] $e$ is supported near $\rho =1/2$,\quad and \quad $e(1/2)\ne 0$.
\end{enumerate}
Put $f_t(\rho)=(1-t)f_0(\rho)+tf_1(\rho)$, $g_t(\rho)=(1-t)g_0(\rho)+tg_1(\rho)$ and 
$$
\alpha_t|(N\times D^2)
=f_t(\rho)\{(1-t)\beta+th(\rho)\nu\}+g_t(\rho)d\theta+te(\rho)d\rho,
$$
where $\nu$ also denotes its pull-back. 
We extend $\alpha_t$ by putting
$$
\alpha_t|(M\setminus (N\times D^2))=\tau\alpha_0+(1-\tau)d\theta
\quad\textrm{where}\quad
\tau=(1-t)^2.
$$
We see from $d\nu= 0$ and $\nu \wedge (d\beta)^{n-1} = 0$ that
$\alpha_t\wedge (d\alpha_t)^n$ can be written as
$$
nf_t^{n-1}(1-t)^n(g_t'f_t-f_t'g_t)
\beta\wedge (d\beta)^{n-1}\wedge d\rho\wedge d\theta
$$
on $N\times D^2$ and 
$$
\tau^{n+1}\alpha_0\wedge(d\alpha_0)^n+\tau^n(1-\tau)
d\theta\wedge(d\alpha_0)^n
$$
on $M\setminus (N\times D^2)$.
Therefore we have
$$
\alpha_t\wedge (d\alpha_t)^n>0\quad (0\le t<1), \quad
\alpha_1\wedge d\alpha_1 = 0 \quad \textrm{and} \quad
\alpha_1\ne 0.
$$
This completes the proof of Theorem \ref{convergence}.
\end{proof}

If a cooriented foliation by oriented leaves has a minimal region 
bordered by closed leaves with totally coherent or totally anti-coherent orientation, 
we call it a dead-end component. 
Let $\F$ be the foliation defined by $\alpha_1$. Then $\F$ 
consists of two dead-end components, the Reeb component 
$\{\rho\le1/2\}\approx N^{2n-1}\times D^2$ and its complement. 
Here the orientation of the 
border leaf depends on the sign of $e(1/2)$.

\begin{rem}
A similar result in the case where $n=1$ is contained 
in the author's paper \cite{Mori}. It provides a foliation 
with Reeb component at the end of an isotopic
deformation of a given cooriented contact structure 
on a closed $3$-manifold. In this case 
the deformation can be presented by a straight line segment 
in the space of $1$-forms such that 
the Reeb field is transverse to the limit leaf along 
the core of the Reeb component for $0\le t<1$. 
The same result is also contained in Etnyre \cite{EtnyreSpinnable}.
Note that, contrastingly to this $3$-dimensional case, 
the above family $\{\alpha_t\}$ is not 
a straight line but a quadratic function of $t$, and 
the Reeb field keeps tangent to the limit leaf along the 
core $\{\rho=0\}$ of the Reeb component for $0\le t<1$.
\label{fol3}
\end{rem}

%%%%%%%%%%%%%%%%%%%%%%%%%%%%%%%%%%%%%%%%%%%%%%%%%%%%%%%%%%%%%%%%%%%%%%%%%%SECT3
\section{Codimension two submanifolds}
Let $T_A$ denote the mapping torus 
$T^2\times \R/(\left(^x_y\right),z+1)
\sim (A\left(^x_y\right),z)$ 
of $A\in SL_2(\Z)$. In the case where $tr A\ge 2$, we have a 
natural contact form on $T_A$. Namely,

\begin{prop} 
1)\quad In the case of $A_{m,0}=
\left(
\begin{array}{cc}
1 & 0 \\
m & 1
\end{array}
\right)\in SL_2(\Z)$ for a positive integer $m$, 
$T_{A_{m,0}}$ is a $Nil$-manifold admitting the 
contact form $\beta=dy+mzdx$.\\
2) (Geiges \cite{Geiges}, 
Ghys \cite{Ghys}, Mitsumatsu \cite{Mitsumatsu})
Let $v_\pm$ be eigenvectors of 
$A$ with 
$$Av_\pm=a^{\pm 1} v_\pm,\quad \textrm{where}\quad 
a>1 \quad \textrm{and}\quad
dx\wedge dy(v_+,v_-)=1.$$
Then the $1$-forms $\beta_\pm=\pm a^{\mp z}dx\wedge dy(v_\pm,\cdot)$
define the Anosov foliations for the suspension flow. 
Using these forms we can construct 
the symplectic cylinder
$$([-1,1]\times T_A, d(\beta_++s\beta_-))\quad(s\in [-1,1])$$
with contact-type boundary
$(-T_A) \sqcup T_A$. Note that $T_A$ and $-T_A=T_{A^{-1}}$ are 
$Sol$-manifolds and the Anosov foliations are the Lie algebraic ones.
\label{1.2}
\end{prop}

Honda \cite{HondaII}, in a part of his classification 
of tight contact structures of $T^2$-bundles, 
showed that, for any $A\in SL_2(\Z)$ with $\mathrm{tr}\,A\ge 2$, 
there exist a positive integer $m$ and a $m$-tuple 
$k=(k_1,\dots,k_m)$ of non-negative integers such that the product
$$
A_{m,k}=
\left(
\begin{array}{cc}
1 & 0 \\
1 & 1
\end{array}
\right)
\left(
\begin{array}{cc}
1 & k_1 \\
0 & 1
\end{array}
\right)
\dots
\left(
\begin{array}{cc}
1 & 0 \\
1 & 1
\end{array}
\right)
\left(
\begin{array}{cc}
1 & k_m \\
0 & 1
\end{array}
\right)
$$
is conjugate to $A$ in $SL_2(\Z)$. Then
Van Horn-Morris \cite{VH} showed that
the above contact structure $\ker\beta$ or $\ker(\beta_-+\beta_+)$ 
of $T_{A_{m,k}}$ is supported by the open-book structure
determined up to equivalence by the following data:

\begin{description}
\item[Page] The page $P$, i.e., the fiber of $\theta$ in the previous section 
is the $m$ times punctured torus which is divided into $m$ copies $P_i$ 
($i=1,\dots,m$) of the three times punctured sphere along disjoint loops 
$\gamma_i$ between $P_i$ and $P_{i+1}$.
\item[Monodromy] The monodromy is the composition 
$\tau(\partial P)\circ \prod_{i=1}^m \{\tau(\gamma_i)\}^{k_i}$, 
where $\tau(\gamma)$ denotes the right-handed Dehn twist 
along a loop $\gamma$ on $P$, and $\tau(\partial P)$ 
the simultaneous right-handed Dehn twist along $\partial$-parallel loops. 
\end{description}
Using this description, the author proved the following theorem.

\begin{thm}
In the case where $m\le 2$, the above $T_{A_{m,k}}$ is 
contactomorphic to the link of the singularity $(0,0,0)$ 
of the complex surface 
$f_{m,k}(\xi,\eta,\zeta)=0$, where 
$$
f_{1,(k_1)}=\xi^2+(\eta-2\zeta^2)(\eta^2+2\zeta^2\eta+\zeta^4-\zeta^{4+k_1}) 
\quad \textrm{and}
$$
$$
f_{2,(k_1,k_2)}=\xi^2+\{(\zeta+\eta)^2-\zeta^{2+k_1}\}\{(\zeta-\eta)^2+\zeta^{2+k_2}\}.
$$
\label{main}
\end{thm}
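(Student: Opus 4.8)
The plan is to realize both contact manifolds by supporting open-book decompositions and to identify these, following the template of Examples \ref{Lawson} and \ref{eg}. On one side, Proposition \ref{sol} furnishes an explicit supporting open-book of $(T_{A_{m,k}},(\beta_-+\beta_+))$ with page the $m$-times punctured torus $P=\bigcup_{i\in\Z_m}P_i$ and monodromy $\tau(\partial P)\circ\prod_i\tau(\gamma_i)^{k_i}$, where the loops $\gamma_i=P_i\cap P_{i+1}$ cut $P$ into three-punctured spheres. On the other side, writing $f_{m,k}=g_{m,k}(\xi,\eta)+\zeta^2$, I would put on the link $L_{m,k}=\{f_{m,k}=0\}\cap S^5$ the $\xi$-projection open-book $\{\arg(\pi_\xi|L_{m,k})=\theta\}_{\theta\in\R/2\pi\Z}$, which is supporting by the reasoning of Example \ref{eg} (and Lemma \ref{sfl}); its page over $\theta$ is the affine curve $C_{\xi_0}=\{g_{m,k}(\xi_0,\eta)+\zeta^2=0\}$ with $\xi_0=\e e^{i\theta}$, and its monodromy is the return map as $\theta$ runs once around $\R/2\pi\Z$. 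The goal is to prove that this open-book is equivalent to the one of Proposition \ref{sol}, whence Giroux's correspondence yields the asserted contactomorphism, since each open-book supports precisely the contact structure in question.

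First I would identify the page. As $C_{\xi_0}$ is cut out by $\zeta^2=-g_{m,k}(\xi_0,\eta)$, it is the double cover of the $\eta$-line branched over the roots of $g_{m,k}(\xi_0,\cdot)$. A direct factorization gives $g_{1,(k_1)}=(\eta-2\xi^2)\bigl((\eta+\xi^2)^2-\xi^{4+k_1}\bigr)$, with three simple roots, while for $m=2$ the four simple roots come from $(\xi+\eta)^2=\xi^{2+k_1}$ and $(\xi-\eta)^2=-\xi^{2+k_2}$. The hyperelliptic Riemann--Hurwitz count (a double cover branched over $2g+1$, resp.\ $2g+2$, points is a genus-$g$ surface with one, resp.\ two, ends) then identifies $C_{\xi_0}$ with the once-punctured torus when $m=1$ and with the twice-punctured torus when $m=2$, matching the page $P$ of Proposition \ref{sol}; its $m$ ends correspond to the $m$ components of the binding $\partial P$.

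The heart of the proof, and the step I expect to be hardest, is to match the monodromy. As $\xi_0=\e e^{i\theta}$ runs once around the origin the branch points trace a braid in the $\eta$-line, and the geometric monodromy of the open-book is the lift of this braid to the double cover $C_{\xi_0}$. My strategy is to treat the construction as a deformation of the baseline case $k=0$, for which Example \ref{eg} (equivalently Proposition \ref{nil}) already gives the monodromy $\tau(\partial P)$. The branch points split into clusters, one per factor of $g_{m,k}$, and the engineered exponent $2+k_i$ governs the relative rotation of the two roots inside the $i$-th cluster: raising this exponent from its baseline value by $k_i$ inserts $k_i$ extra half-twists of that cluster, which lift to $\tau(\gamma_i)^{k_i}$ once the encircling loop is identified with the separating curve $P_i\cap P_{i+1}$. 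The delicate points are exactly this identification of lifted loops, the verification that the extra twisting is localized and simply multiplies the baseline monodromy — rather than interfering with the global rotation of the cluster centres, which is what supplies $\tau(\partial P)$ — and the check that every resulting Dehn twist is right-handed, consistent with the complex orientation. In parallel the induced action on $H_1$ of the torus should be seen to be conjugate to $A_{m,k}$, which independently pins down the diffeomorphism type $L_{m,k}\approx T_{A_{m,k}}$ and can be cross-checked from the plumbing graph of the double branched cover over the plane-curve link $\{g_{m,k}=0\}\cap S^3$.

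Assembling these, the two supporting open-books have the same page and, up to conjugation and isotopy, the same monodromy $\tau(\partial P)\circ\prod_i\tau(\gamma_i)^{k_i}$, hence they are equivalent. Since each supports precisely the given contact structure — $(\beta_-+\beta_+)$ on $T_{A_{m,k}}$ and the Milnor-fillable structure on $L_{m,k}$ — the equivalence of open-books produces the contactomorphism asserted in the theorem, the case $k=0$ reducing to the uniqueness statement of Proposition \ref{nil}.
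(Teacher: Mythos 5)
Your proposal is correct and takes essentially the same route as the paper: the paper also views the page of the $\xi$-projection open book as a double cover of the $\eta$-axis branched over the roots of the $\eta$-polynomial, computes the braid these roots trace as $\xi$ circles the origin (explicitly $(\sigma_1\circ\sigma_2)^6(\sigma_1)^{k_1}$, resp.\ $(\sigma_1\circ\sigma_2\circ\sigma_3)^4(\sigma_1)^{k_1}(\sigma_3)^{k_2}$, which is exactly your ``global rotation plus $k_i$ extra half-twists per cluster''), lifts it to Dehn twists, and concludes via the chain relation $\tau(\partial P)\simeq(\tau(\ell_1)\circ\tau(\ell_2))^6$ and Proposition \ref{sol}. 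The ``delicate point'' you flag --- that the lifted braid generators give right-handed Dehn twists along the lifted arcs and compose to the stated monodromy --- is precisely what the paper isolates as Lemma \ref{easy}.
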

Since we may consider the singularity links as contact submanifolds of $S^5$, 
this theorem provides a contact submanifold 
with $Nil$- or $Sol$-geometry according to $k=0$ or not. 
We call it a $Nil$-type or $Sol$-type contact submanifold. 
The author asked whether there exists a similar result for $m=3$ 
since $\ker\beta$ is still contactomorphic to a Brieskorn $Nil$-manifold
for $(m,k)=(3,0)$. Subsequently, 
Naohiko Kasuya \cite{Kasuya} gave a complete answer to this question. 
Namely,

\begin{thm}[Kasuya \cite{Kasuya}]
The above contact manifold $T_{A_{m,k}}$ is 
contactomorphic to a link of isolated surface singularity 
in $\C^3$ if and only if $m\le 3$, where the singularity can be taken as 
$(0,0,0)$ of the surface $\xi^a+\eta^b+\zeta^c+\xi\eta\zeta=0$
where $(a,b,c)$ stands for $(2,3,6+k_1)$ in the case where $m=1$, 
for $(2,4+k_1,4+k_2)$ in the case where $m=2$, and 
for $(3+k_1,3+k_2,3+k_3)$ in the case where $m=3$. 
\label{Kas}
\end{thm}

We would like to sketch the proof of Theorem \ref{main}. 
First we prepare an easy proposition 
which enabled the author to find the singularities. 

\begin{prop}
The complex curve $C: \xi^2=-(\eta-p_1)\cdots(\eta-p_{m+2})$ 
on the $\xi\eta$-plane is topologically 
a once or twice punctured, properly embedded, 
oriented surface with Euler characteristic $(-m)$
if the points $p_i$ are mutually distinct.
These points are the critical values of 
the hyperelliptic double covering $\pi_\eta|C$ where $\pi_\eta$ denotes the 
projection to the $\eta$-axis. 
Let $B: p_1=p_1(\theta),\dots,p_{m+2}=p_{m+2}(\theta)$ 
be a closed braid on $\C\times S^1$ $(\theta\in S^1)$. 
Then the above curve $C=C_\theta$ traces a surface bundle 
over $S^1$. 
Fix a proper embedding $l$ of $\R$ into the $\eta$-axis such that 
$$
l(1)=p_1(0),\dots,l(m+2)=p_{m+2}(0).
$$ 
Suppose that the closed braid $B$ is isotopic to 
the geometric realization of 
a word
$$
\prod_{j=1}^J\{\sigma_{i(j)}\}^{q(j)}\quad
(q(j)\in\Z,\,i(j)\in\{1,\dots,m+1\}),
$$
of the right-handed exchanges $\sigma_i:\C\to\C$ 
of strands $p_i$ and $p_{i+1}$ along the arc $l([i,i+1])$
$(i=1,\dots,m+1)$. 
Then the monodromy of the surface bundle $C_\theta$ is  
$$\displaystyle \prod_{j=1}^J\{\tau(\ell_{i(j)})\}^{q(j)}\quad
\textrm{where} \quad
\ell_i=(\pi_\eta|C)^{-1}(l([i,i+1])).$$
\label{easy}
\end{prop}

\begin{proof}[Sketch of the proof of {\rm Theorem \ref{main}}]
Regard $\zeta$ as a parameter and take
the branched double covering 
$\pi_\eta|C_\zeta$ of the curve $C_\zeta: f_{m,k}=0$ on the $\xi\eta$-plane.
Then the critical values of $\pi_\eta|C_\zeta$ are 
$$p_1,p_2=-\zeta^2\{1-(\zeta^{k_1})^{1/2}\}
\quad \textrm{and} 
\quad p_3=2\zeta_2\quad 
\textrm{if}
\quad m=1$$
$$(\textrm{resp.}\quad 
p_1,p_2=-\zeta\{1-(\zeta^{k_1})^{1/2}\},\,\,
p_3,p_4=\zeta\{1-(-\zeta^{k_2})^{1/2}\}\quad \textrm{if} \quad
m=2).$$
As $\zeta$ rotates along a small circle $|\zeta|=\e$ once counter-clockwise, 
the points $p_1(\zeta)$, \dots, $p_{m+2}(\zeta)$ traces a closed braid,
which is clearly a geometric realization of 
$$
(\sigma_1\circ\sigma_2)^6(\sigma_1)^{k_1}\qquad(\textrm{resp.}\quad
(\sigma_1\circ\sigma_2\circ\sigma_3)^4(\sigma_1)^{k_1}(\sigma_3)^{k_2}).
$$
From Proposition \ref{easy} and the relation
$$\tau(\partial C_\zeta)\simeq 
(\tau(\ell_1)\circ \tau(\ell_2))^6
\quad (\textrm{resp.}\quad
\tau(\partial C_\zeta)\simeq 
(\tau(\ell_1)\circ \tau(\ell_2) \circ \tau(\ell_3))^4),$$
we see that the singularity link of $\{f_{m,k}=0\}$ 
admits the above mentioned open-book structure. 
To be more precise, we have to consider everything in a small ball, 
take $\e$ much smaller, and use the result of Caubel-Nemethi-Popescu-Pampu \cite{CNP}. \end{proof}

%%%%%%%%%%%%%%%%%%%%%%%%%%%%%%%%%%%%%%%%%%%%%%%%%%%%%%%%%%%%%%%%%%%%%%%%SECT4
\section{The Thurston-Bennequin inequality for convex hypersurfaces}

\subsection{Preliminaries on the Thurston-Bennequin inequality}
Let $\Sigma$ be a compact connected oriented hypersurface 
embedded in a contact $(2n+1)$-manifold $(M^{2n+1},\ker\alpha)$, and 
$S_+(\Sigma)$ (resp. $S_-(\Sigma)$) the set of the positive (resp. negative) 
tangent points of $\Sigma$ to $\ker\alpha$. Here
the sign of the tangency coincides with 
the sign of $(d\alpha|\Sigma)^n$ at each tangent point. 
We assume that 
the union $S(\Sigma)=S_+(\Sigma)\cup S_-(\Sigma)$ 
is a discrete subset in $\textrm{int}\,\Sigma$. 
Considering on the symplectic hyper\-plane
$(\ker\alpha,d\alpha|\ker\alpha)$ at each point on $\Sigma$,
we see that the symplectic orthogonal of the intersection
$T\Sigma\cap \ker\alpha$ forms an oriented line field 
$L$ on $\Sigma$. 
Then the set of singular points of $L$ 
coincides with the set $S(\Sigma)$ of tangent points.

\begin{defn}
The singular foliation $\F_\Sigma$ defined by $L$ is
called the characteristic foliation of $\Sigma$ 
with respect to the contact structure $\ker\alpha$.
\end{defn}

\noindent
Put $\lambda=\alpha|\Sigma$ and take any volume form $dvol$
on $\Sigma$. Then we see that the vector field $X$ on 
$\Sigma$ defined by 
$\iota_X dvol=\lambda\wedge(d\lambda)^{n-1}$ 
is a positive section of $L$. 
Moreover
$$
\iota_X\{\lambda\wedge(d\lambda)^{n-1}\}=
-\lambda\wedge\iota_X(d\lambda)^{n-1}=0\quad
(\lambda\wedge(d\lambda)^{n-1}\ne 0).
$$
This implies that $\lambda\wedge\iota_Xd\lambda=0$, and therefore the flow of $X$
preserves the conformal class of $\lambda$.
Since $dvol$ is arbitrary, we may take $X$ as any positive section of $L$.
Therefore $\lambda$ defines a holonomy invariant
transverse contact structure of $\F_\Sigma$.
Note that, even for another volume form $dvol'$ on $\Sigma$, 
the sign of $\mathrm{div}\, X=(\LEE_X dvol')/dvol'$ at each singular point $p\in S(\Sigma)$ 
coincides with the sign of the tangency at $p$.
Thus $\F_\Sigma$ itself contains
the information about the sign of the
tangency. 

On the other hand, by using a positive section $X$ of $L$, 
we can define the index
$\mathrm{Ind}\,p$ of a singular point $p\in S(\Sigma)$
as the vector field index $\mathrm{Ind}_X p$.

\begin{defn}
Suppose that the boundary $\partial \Sigma$ 
of the above hypersurface $\Sigma$ is non-empty, 
and the characteristic foliation $\F_\Sigma$ is 
positively (i.e., outwards) transverse to $\partial\Sigma$. 
Then we say that $\Sigma$ is a hypersurface with
contact-type boundary. The
contact-type boundary $\partial\Sigma$ inherits 
the contact form $\lambda|\partial\Sigma=\alpha|\partial\Sigma$.
\end{defn}

\begin{rem}
We usually fix a primitive $1$-form $\lambda$ 
on an exact symplectic manifold $(\Sigma, d\lambda)$. 
This is equivalent to fix a vector field $X$ 
with $\iota_X d\lambda=\lambda$.
If $X$ is 
positively transverse to the boundary $\partial \Sigma$,
then $(\partial\Sigma,\lambda|\partial\Sigma)$ is called the 
contact-type boundary.
The above definition is a natural shift of this notion into 
our setting.
\end{rem}

Let $D^2$ be an embedded disk with contact-type boundary
in a contact $3$-manifold. We say that $D^2$ is overtwisted
if the singularity $S(D^2)$ consists of a single sink point.
Note that a sink point is a negative singular point
since it has negative divergence.
If a contact $3$-manifold contains overtwisted disks, 
we say that it is overtwisted, else it is tight.
We can show that the existence of an overtwisted disk
with contact-type boundary
is equivalent to the existence of an overtwisted disk with 
Legendrian boundary, which is an embedded disk $D'$ similar to the above 
$D^2$ except that the characteristic foliation $\F_{D'}$
is tangent to the boundary $\partial D'$, where $\partial D'$ (or $-\partial D'$)
is a closed leaf of $\F_{D'}$. Indeed perturbing the above $D^2$ if necessary, 
we can find $D'$ in $D^2$. Conversely, the characteristic foliation of 
the boundary of an usual neighborhood of $D^2$ 
contains a sink point $N$ (north pole), a source point $S$ (south pole)
and a pair of closed orbits which bounds a tubular neighborhood of the equator. 
Thus, removing a narrow tubular neighborhood of 
the closed orbit with negative divergence, 
we obtain the above $D^2$ as the disk containing $N$.  

Let $\Sigma$ be any surface with contact-type (i.e., positive
transverse) 
boundary embedded in the standard $S^3$.
Then Bennequin \cite{Bennequin} proved the following inequality
which implies the absence of overtwisted disks in $S^3$, i.e., the tightness of $S^3$.

\begin{TB}[I]
$\displaystyle \sum_{p\in S_-(\Sigma)}\mathrm{Ind}\,p \leq 0$.
\end{TB}

\noindent
Eliashberg proved the same inequality for symplectically fillable
contact $3$-manifolds (\cite{Eliashberg2}),
and finally for all tight contact $3$-manifolds (\cite{Eliashberg3})
by applying the elimination lemma in Giroux \cite{GirouxConvex}.
Niederkr\"uger \cite{Niederkruger} and Chekanov found
an $(n+1)$ dimensional analogue of an overtwisted disk,
i.e., a plastikstufe (or an overtwisted family)
which is the product $D^2\times L^{n-1}$ of an overtwisted disk $D^2$
with a closed isotropic submanifold $L^{n-1}$ (see \S 5.2 for the precise definition).
However, in order to create some meaning of the above inequality
in higher dimension, we need a $2n$-dimensional analogue of 
overtwisted disk. Here we should notice that a folklore says that 
the above inequality does not hold in higher dimension 
as a merely algebraic inequality described in the following remark (see also Appendix). 
In order to bring out the geometric flavor of the inequality, 
we put a strong limit on the test hypersurfaces. 
It is the ``convexity'' in the next subsection. 
\begin{rem}
The Thurston-Bennequin inequality can also be written in terms of
relative Euler number as follows. The above vector field $X\in T\Sigma \cap \ker\alpha$
is a section of $\ker\alpha|\Sigma$ which is canonical (i.e. exhausting)
near the boundary $\partial\Sigma$. Thus under a certain 
coherent boundary condition we have
$$
\langle e(\ker\alpha),\,[\Sigma,\partial\Sigma]\rangle=
\sum_{p\in S_+(\Sigma)}\mathrm{Ind}\,p-\sum_{p\in S_-(\Sigma)}\mathrm{Ind}\,p.
$$ 
Then the Thurston-Bennequin inequality can be expressed as 
$$
-\langle e(\ker\alpha),\,[\Sigma,\partial\Sigma]\rangle \leq -\chi(\Sigma).
$$
There is also an absolute version of the inequality
for a closed hypersurface $\Sigma$ with $\chi(\Sigma)\leq 0$ expressed as
$|\langle e(\ker\alpha),\,[\Sigma]\rangle|\leq -\chi(\Sigma)$,
or equivalently as
$$
\sum_{p\in S_-(\Sigma)}\mathrm{Ind}\,p \leq 0\quad\quad
\textrm{and}\quad\quad
\sum_{p\in S_+(\Sigma)}\mathrm{Ind}\,p \leq 0.
$$
The absolute version trivially holds
if the Euler class $e(\ker \alpha)\in H^{2n}(M^{2n+1}; \Z)$
is a torsion.
Note that the inequality and its absolute version can be defined
for any oriented plane field on an oriented $3$-manifold $M^3$
(see Eliashberg-Thurston \cite{EliashbergThurston}).
They are originally proved for a foliation on $M^3$ without Reeb components
by Thurston (see \cite{Thurston}). On the other hand, 
we can deform even a tight contact structure to 
a foliation with Reeb component (\cite{Mori}; see also \cite{MM}).
Thus many foliations with Reeb components also satisfy the inequality.   
\end{rem}
%%%%%%%%%%%%%%%%%%%%%%%%%%%%%%%%%%%%%%%%%%%%%%%%%%%%%%%%%%SUBSECT4-2
\subsection{Convex hypersurfaces}
In this subsection, we review the convex hypersurface theory 
doe to Giroux \cite{GirouxConvex}, and add to it a relative version.

A vector field $Z$ on a contact manifold 
$(M^{2n+1}, \ker\alpha)$ is called a contact vector field if
it satisfies $\alpha\wedge\LEE_Z\alpha=0$, i.e., preserves $\ker\alpha$. 
Let $V_{\ker\alpha}$ denote the space of contact vector fields 
for $\ker\alpha$. It is fundamental that the contact form $\alpha$ 
determines the linear isomorphism
$\alpha(\cdot):V_{\ker \alpha}\to C^\infty(M^{2n+1})$.
Indeed we can identify $V_{\ker \alpha}$ to the space of Hamiltonian vector 
fields on the symplectization $\left(\R(\ni s)\times M^{2n+1}, d(e^s\alpha)\right)$ 
for functions which split as $e^sH$ ($H\in C^\infty(M^{2n+1})$). 
\begin{defn}
1)
For a contact vector field $Z$, 
the function $H=\alpha(Z)$ is called
the contact Hamiltonian function. 
Conversely for a function $H$ on $M^{2n+1}$,
the contact vector field $Z$ uniquely determined by $\alpha(Z)=H$ is called
the contact Hamiltonian vector field.
The contact Hamiltonian vector field of the constant function $1$ lies in the degenerate 
direction of $d\alpha$ and is called the Reeb field.

2)
A closed oriented hypersurface $\Sigma$ embedded in a contact manifold
is said to be convex if there exists a contact vector field
transverse to $\Sigma$.
\end{defn}

Let $Z$ be a contact Hamiltonian vector field of a function $H$ 
which is positively transverse to a closed convex hypersurface $\Sigma$. 
Perturbing $H$ if necessary, we may assume that 
the level set $\{H=0\}$ is a regular hypersurface transverse to $\Sigma$. 
Let $\Sigma\times(-\e,\e)$ denote a neighborhood of 
$\Sigma$ on which $Z$ can be expressed as $\partial/\partial z$ ($z\in(-\e,\e)$).
In the rest of this subsection, we restrict everything to 
$\Sigma\times(-\e,\e)$. Further we assume that
$\alpha$ is $z$-invariant by multiplying it by a positive function if necessary.
Then the restrictions 
$\pm Z|\{\pm H>0\}$ are the Reeb fields of 
$\pm \alpha/H$ and $Z$ is tangent to their partition $\{H=0\}$. 
Indeed we can write $\alpha=\lambda+ H dz$ where $\lambda$ is
(the pull-back of) the restriction $\alpha|\Sigma$ 
and $H$ is a $z$-invariant function.

\begin{defn} The above
$\Gamma$ is called the dividing set on $\Sigma$ with respect to $Z$.
$\Gamma$ divides
$\Sigma$ into the positive region
$\Sigma_+=\{(H|\Sigma)\ge0\}\subset \Sigma$ and the negative region
$-\Sigma_-=\{(H|\Sigma)\le0\}\subset \Sigma$. 
We orient $\Gamma$ as $\Gamma=\partial \Sigma_+$ 
or equivalently as $\Gamma=\partial \Sigma_-$.
\end{defn}

We see that the $2n$-form 
$\Omega=(d\lambda)^{n-1}\wedge (Hd\lambda+n\lambda dH)$ 
satisfies $\Omega\wedge dz=\alpha\wedge(d\alpha)^n$. 
Thus the characteristic foliation $\F_\Sigma$ is positively transverse 
to the dividing set $\Gamma$. Since $\lambda$ defines the 
holonomy invariant transverse contact structure of $\F_\Sigma$, 
$\Gamma$ is a contact submanifold. Let $\beta$ be any
contact form presenting the contact structure.
Then, changing $\alpha$ and $H$ with keeping $\ker\alpha$ and $\Gamma$ 
if necessary, we have $\alpha=e^{-H^2}\beta+Hdz$  
on a small neighborhood $\{-\e'<H<\e'\}$ of $\Gamma\times(-\e,\e)$.

Let $d\lambda_\pm$ be exact symplectic forms 
on $\textrm{int}\,\Sigma_\pm$ such that $\lambda_\pm\wedge\lambda=0$
and $\lambda_+|\Gamma=\lambda_-|\Gamma=\beta$.
Then $\lambda_\pm\pm dz$ is a $z$-invariant contact form on 
$\textrm{int}\,\Sigma_\pm \times(-\e,\e)$.

\begin{defn}
1) Let $(\textrm{int}\,\Sigma_+,d\lambda_+)$ be an exact symplectic manifold. 
Then the pair $(\textrm{int}\,\Sigma_+\times\R,\lambda_++dz)$ ($z\in \R$) 
is called its contactization with respect to $\lambda_+$.

2) Let $(\Sigma_+,d\lambda_+)$ be the compactification of 
an exact symplectic manifold with contact-type end. 
Precisely, assume that the primitive $1$-form $\lambda_+$   
can be expressed as $e^{-s^2}\beta$ on a collar neighborhood 
$(-\e',0]\times\partial\Sigma_+$ of the manifold $\Sigma_+$,
where $s$ is the coordinate of $(-\e',0]$, 
and $\beta$ is the pull-back of $\lambda_+|\partial\Sigma$.  
Take a function $g_+$ on $\Sigma_+$ such that 
$g_+$ is a decreasing function of $s$ on $(-\e',0]\times \partial\Sigma_+$,
$g_+=-s$ holds near $\partial\Sigma_+$, and $g_+=1$ holds except on the collar.
Then $\alpha_+=\lambda_++g_+dz$ is a contact form on 
the cylinder $\Sigma_+\times \R$. 
We call $(\Sigma_+\times\R,\ker\alpha_+)$
the modified contactization of 
the exact symplectic manifold $(\textrm{int}\,\Sigma_+,d\lambda_+)$ 
with respect to $\lambda_+$. 
\label{modified}
\end{defn}

We can identify the boundaries of the modified contactizations 
$\Sigma_+\times\R$ ($z\in\R$) w.r.t. $\lambda_+$ 
and $\Sigma_-\times(-\R)$ w.r.t. $\lambda_-$
as is depicted in Figure \ref{unify}. 
\begin{figure}[ht]
\centering
\includegraphics{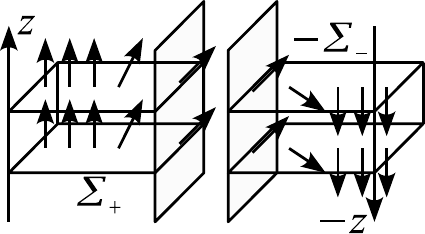}
\caption{The modified
contactizations: The arrows present the Reeb fields of $\alpha_\pm$
and the shaded parts are identified.}
\label{unify}
\end{figure}
Then we 
obtain a contact form $\widetilde{\alpha}=\alpha_\pm$ on 
$(\Sigma_+\cup(-\Sigma_-))\times\R$ which is expressed as 
$\widetilde{\alpha}=e^{-s^2}\beta+ g dz$ on $(-\e',\e')\times \partial\Sigma_+\times\R$, 
where $g=\pm g_\pm$ on $\Sigma$ is the smoothing of $\pm1$ on $\pm\Sigma_\pm$.

\begin{defn}
The contact manifold $((\Sigma_+\cup(-\Sigma_-))\times \R,\ker\widetilde{\alpha})$
is called the unified contactization of $\Sigma=\Sigma_+\cup(-\Sigma_-)$.
\label{unified}
\end{defn}

Since the convex hypersurface $\Sigma\subset (M^{2n+1},\alpha)$ is compact, 
the original neighborhood $\Sigma\times(-\e,\e)$ is contactomorphic to 
a neighborhood of $(\Sigma_+\cup(-\Sigma_-))\times\{0\}$ in the unified contactization. 
This is the convex hypersurface theory due to Giroux. We add to it a relative version. 

\begin{defn}
Let $\Sigma$ be a compact connected oriented hypersurface with 
non-empty contact-type boundary embedded in a contact manifold $(M^{2n+1},\alpha)$.
Then $\Sigma$ is said to be convex if there exists a contact vector field $Z$ 
which is positively transverse to $\Sigma$ and satisfies the boundary condition 
$\alpha(Z)|\partial\Sigma>0$.
\label{convexb}
\end{defn}

We also put $H=\alpha(Z)$ after suitably perturbing 
the contact vector field $Z$. Then 
the dividing set $\Gamma=\{H=0\}\cup \Sigma$
divides $\Sigma$ into the positive region $\Sigma_+$ 
and the (possibly empty) negative region $-\Sigma_-$ 
in the same way as above. 
Then we have 
$\partial\Sigma=\partial \Sigma_+ \setminus \partial \Sigma_-\ne\emptyset$
(see Figure \ref{convex}).

\begin{figure}[h]
\centering
\includegraphics{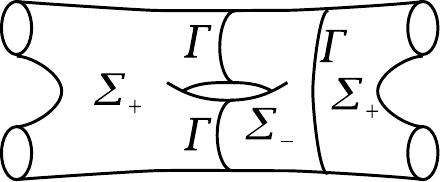}
\caption{A convex hypersurface with contact-type boundary}
\label{convex}
\end{figure}

We can also construct the modified contactization and use it as the model 
of a neighborhood of the convex hypersurface as long as 
we do not modify the contactization of $\Sigma_+$ near 
$\partial\Sigma\times \R\subset \partial\Sigma_+\times \R$.
We explain the treatment of the boundary $\partial\Sigma$ 
in the following subsection. 
%%%%%%%%%%%%%%%%%%%%%%%%%%%%%%%%%%%%%%%%%%%%%%%%%%%%%%%%SUBSECT4-3
\subsection{Open-book structure with convex pages}
We can construct a contact structure from a given open-book structure 
with convex pages. Then the open-book structure 
can be considered as a generalization 
of ``quasi-compatible'' open-book structure introduced by 
Etnyre and Van Horn-Morris in \cite{EV}. 
In fact Etnyre commented that their result on Giroux 
torsion might be generalized to the setting of this article.  
His idea motivated the work of 
Massot, Niederkr\"uger and Wendl in \cite{Massot} as
is mentioned there. The idea of placing a 
Lutz tube along the binding of an open-book structure 
is also found in Ishikawa's work \cite{Ishikawa}. 

\begin{prop}[Construction of open-book structure with convex pages]
Let $(\Sigma_\pm,d\lambda_\pm)$ be two compact exact symplectic manifolds
with contact-type boundary. 
Suppose that there exists an inclusion $\iota:\partial\Sigma_-\to
\partial\Sigma_+$ preserving the contact form, i.e.,  
$\iota^*(\lambda_+|\partial\Sigma_+)=\lambda_-|\partial \Sigma_-$.
Similarly to the construction of unified contactization, 
we modify the exact symplectic structure so that 
they match up to define an exact $2$-form $d\lambda$
on the union $\Sigma=\Sigma_+\cup_\iota (-\Sigma_-)$. 
Precisely, the modification is supported in a small neighborhood 
$(-2\e', 2\e')\times \Gamma$ of the dividing set 
$\Gamma=\partial(-\Sigma_-)\subset\Sigma$, and 
the primitive $1$-form $\lambda$ is locally expressed as 
$\lambda=e^{-s^2}\beta$ for $-\e'<s<\e'$, 
where $s$ denotes the coordinate of the interval $(-2\e', 2\e')$, and
$\beta$ (the pull-back of) the above $\iota$-invariant 
contact form on $\Gamma$. 
Let $\varphi: \Sigma\to\Sigma$ be a diffeomorphism such that
\begin{enumerate}
\item[i)] the support of $\varphi$ does not intersect with 
$((-2\e',2\e')\times\Gamma)\cup\partial\Sigma$, and
\item[ii)] $\varphi^*\lambda-\lambda=\pm dh$ 
(according to the sign of the region) holds for a suitable 
positive function $h$ on $\Sigma$ 
which is equal to $1$ on $((-2\e',2\e')\times\Gamma)\cup\partial\Sigma$. 
\end{enumerate}
Then the contact form $\widetilde{\alpha}$ 
of the unified contactization $\Sigma\times\R$ 
determines a contact form on the mapping torus
$\Sigma\times\R/(x,z+h)\sim(\varphi(x),z)$. 
We cap-off the boundary $\partial\Sigma\times(\R/\Z)$ by the tube
$$
\left(\partial\Sigma\times D^2, \ker\left(
\frac{f_0(\rho)}{f_0(1)}\lambda|\partial\Sigma
+\frac{hg_0(\rho)}{2\pi}d\theta\right)\right),
$$
where $f_0(\rho)$ 
and $g_0(\rho)$ are the functions in the setting of Theorem \ref{convergence}. 
This provides a closed contact manifold $(M^{2n+1},\ker\alpha)$
on which the family of the convex hypersurfaces 
$\{\theta=2\pi z/h=\textrm{const}\}$ 
defines an open-book structure.  
\label{openbook2}
\end{prop}

The hypersurface $\{\theta\equiv 0 \mod \pi\}$ is clearly convex. 
In other word, the page $\Sigma'=\{\theta=0\}$ is an extension of 
$\Sigma$ such that $\partial\Sigma'$ is contained in the dividing set of $\Sigma'$. 
We can define the unified contactization of $\Sigma'$ just by partially glueing 
the modified contactizations in Definition \ref{modified} 
as is described in Definition \ref{unified}. 
Clearly, the unified contactization of $\Sigma$ is a contact submanifold 
of the extended unified contactization of $\Sigma'$.
The construction in Proposition \ref{openbook2} 
can be considered as follows. 
First we extend the mapping torus of $\Sigma$ 
to that of $\Sigma'$. Then we shrink 
its boundary $\partial\Sigma'\times \R/\Z$ 
into $\partial\Sigma'$ to obtain the open-book structure.
\begin{defn}
We say that one unbinds the open-book structure 
to the mapping torus and 
one rebinds the latter to the former. 
We use the same terminologies even when 
the open-book structure is defined only near the binding. 
\label{unbind}
\end{defn}

\begin{rem} 
Giroux proved that any symplectomorphism 
supported in $\mathrm{int}\,\Sigma_+$ is isotopic 
through such symplectomorphisms to
$\varphi$ with $\varphi^*\lambda_+-\lambda_+=dh_+$ ($\exists h_+>0$).
From contact topological point of view, 
since we can scale anything at will, 
we may assume that $h|\partial\Sigma_+=1$.  
It is remarkable that he can prove the existence of 
supporting open-book decomposition. Namely, 
we can obtain a supporting open-book structure
of a given closed contact manifold by using 
the result of Ibort, Mart\'inez-Torres and Presas in \cite{IMP}
on applicability of the Donaldson-Auroux approximately 
holomorphic method 
to complex valued functions on contact manifolds
(see \cite{Giroux}). 
Trivially, this also implies the existence of the above open-book structure.   
\end{rem}

%%%%%%%%%%%%%%%%%%%%%%%%%%%%%%%%%%%%%%%%%%%%%%%SUBSECT4-4
\subsection{The inequality for convex hypersurfaces}
For a convex hypersurface $\Sigma$ with contact-type boundary, 
the inequality in \S4.1 can also be written as 

\begin{TB}[II]
$\displaystyle \chi(\Sigma_-)\leq 0$
{\rm (}or $\Sigma_-=\emptyset${\rm )}.
\end{TB}

\noindent
Suppose that there exists a convex disk $\Sigma$ with
contact-type boundary in a contact $3$-manifold which is the union
$\Sigma_+\cup(-\Sigma_-)$ of a negative disk region $\Sigma_-$ and
a positive annular region $\Sigma_+$.
Then, since $\chi(\Sigma_-)=1>0$, 
the convex disk $\Sigma$ violates the Thurston-Bennequin inequality.
We call the convex disk $\Sigma$ a convex overtwisted disk.
The Giroux approximation in \cite{GirouxConvex}, 
implies that any overtwisted disk with contact-type boundary is 
approximated by a convex overtwisted disk.
Moreover, it also implies that the inequality for 
any convex surface holds if and only if 
the contact $3$-manifold is tight. 
This leads us to the following definition.

\begin{defn}
A convex overtwisted hypersurface 
is a connected convex hypersurface $\Sigma_+\cup(-\Sigma_-)$ with 
non-empty contact-type boundary
which satisfies $\chi(\Sigma_-)>0$.
We say that a contact structure is convex-overtwisted (resp. convex-tight)
if it contains some (resp. no) overtwisted convex hypersurface.
\end{defn}

\noindent
Allegorically, a convex hypersurface percepts `extra parts' in a 
contact manifold
at the sensor $\Sigma_-$
through the counter $\chi(\Sigma_-)$ 
where each extra part must stick out a `tight' wear (i.e. tight contact structure). 
In other words, we see that the manifold 
is wearing a `loose' (i.e. overtwisted) contact structure if 
the number $\chi(\Sigma_-)$ is positive. 
Indeed the original Thurston-Bennequin inequality
expresses the tightness of a contact $3$-manifold 
as the absence of `extra parts' (i.e. Lutz tubes).

\begin{rem}
Any convex overtwisted hypersurface 
$\Sigma$ must contain a connected component $S_+$ of 
the positive region $\Sigma_+$ with 
$\partial S_+\cap \partial \Sigma\neq\emptyset$
and $\partial S_+\cap\partial \Sigma_-\neq\emptyset$. 
Thus $S_+$ is a connected symplectic manifold
with disconnected contact-type boundary. 
The existence problem of such a symplectic manifold is
called Calabi's question, and McDuff \cite{McDuff} 
found the first example. 
Then a simpler example appeared in the 
literatures referred in Proposition \ref{1.2} 2). 
It is a cylinder with $3$-manifold base presenting 
a gradual exchange between positive and negative contact structures. 
Mitsumatsu \cite{Mitsumatsu} further studied on 
coexistence situation of positive and negative contact structures 
by means of a generalization of Anosov flow (i.e. projectively Anosov flow). 
On the other hand a Lutz twist can be considered as a local 
exchange of the direction of a closed orbit $K$ of a Reeb flow.
Here the direction is determined by 
the sign of the restricted contact form on $K$. 
These facts motivated this author to generalize Lutz twist. 
Let $N^3$ be a codimension two oriented submanifold 
of a contact $5$-manifold $(M^5,\ker\alpha)$ with trivial normal bundle. 
Suppose that $N^3$ is tangent to the Reeb flow of $\alpha$, 
and the restriction $\alpha|N^3$ is a positive contact form.
Further suppose that an Anosov flow on $N^3$ 
presents an exchange of $\alpha|N^3$ with a negative contact form. 
Then, as is described in the next section, we can 
make a modification of the contact structure supported near $N^3$ 
which realizes that exchange. 
Note that the Anosov flow on the submanifold
tangent to both of the positive and negative contact structures. 
In the original $3$-dimensional case, the contact structures are 
the oriented zero sections of $TK$, and we may consider that 
the $0$-dimensional flow is tangent to them. 
In other word, the notion of Anosov flow on $3$-manifold 
generalizes the identity on the circle.
\end{rem}
 
%%%%%%%%%%%%%%%%%%%%%%%%%%%%%%%%%%%%%%%%%%%%%%%%%%%%%%%%%%%%%%%%%%%%%%%%%SECT5
\section{A generalization of Lutz twist}
\subsection{Definitions and examples}
We will perform a generalization of Lutz twist along a twistable contact knot
defined as follows.

\begin{defn}
A codimension two closed contact submanifold 
$(K^{2n-1},\ker\beta)$ of a contact manifold $(M^{2n+1},\ker\alpha)$ is 
called a contact knot if it is connected and its normal disk bundle is trivial. 
Suppose that there exists a cylinder $I \times K^{2n-1}$ which is the
compactification of an exact symplectic manifold with contact-type end.
Here we assume that the end $\{1\}\times K^{2n-1}$ is contactomorphic 
to $(K^{2n-1}, \ker\beta)$. Then we say that the contact knot 
$K^{2n-1}\subset M^{2n+1}$ is twistable.
\end{defn}

The contact submanifold $N^{2n-1}$ in 
Definition \ref{openbook} (or $\partial\Sigma$ in 
Proposition \ref{openbook2}) is a contact knot 
if it is connected. Now we assume that $K^{2n-1}=N^{2n-1}$ is twistable. 
Then we firstly unbind the contact manifold $M^{2n+1}$ 
along $K^{2n-1}$ as is described in Definition \ref{unbind}; 
secondly insert the quotient $I\times K^{2n-1}\times (\R/\Z)$ of the 
modified contactization of the cylinder $I\times K^{2n-1}$ 
with turning it upside down, i.e., as $(-I\times K^{2n-1}) \times (-\R/\Z)$; 
and lastly rebind 
the new boundary $(-K^{2n-1})\times (-\R/\Z)=K^{2n-1}\times (\R/\Z)$ 
to regain a closed contact manifold diffeomorphic to $M^{2n+1}$. 
This modification is topologically the insertion of the tube 
$K^{2n-1}\times D^2$ along $K^{2n-1}$. From open-book point of view, 
it is the addition of exact symplectic collar $I\times K^{2n-1}$ 
to the page with reversed orientation. 
Note that the reversion of the orientation of the convex 
hypersurface with non-empty contact-type boundary 
makes the sign of the region next to the boundary 
negative, and therefore breaks the boundary condition
in Definition \ref{convexb}. 
Thus the addition of the positive collar is mandatory. 
Since it does not change the manifold $M^{2n+1}$,
it is considered as a modification of the contact structure. 
Moreover, since the contact structure 
around any contact knot $(K^{2n-1},\ker\beta)$ can be written as
$$\ker(e^{-\rho}\beta+\rho d\theta)\quad ((\sqrt{\rho},\theta)\in D^2,\,\,\, \rho\ll1),$$
we can also unbind the contact manifold $(M^{2n+1},\ker\alpha)$ along 
$K^{2n-1}$ to obtain a portion of modified contactization. 
(This is just a local construction. Indeed, 
if $(K^{2n-1},\beta)$ is not fillable, 
we can not construct a whole modified contactization.)
Thus we can perform a similar insertion of tube along any twistable contact knot.  

\begin{prop}[Definition of a generalization of Lutz twist]
We call the above tube $K^{2n-1}\times D^2$ an abstract Lutz tube. 
To show its existence, 
we describe explicitly the cylinder $I\times K^{2n-1}$ and the 
other end $-\{0\}\times K^{2n-1}$, which become
the page and the binding of the open-book structure of the Lutz tube. 
Suppose that 
a contact knot $K^{2n-1}$ with contact form $\beta$ also 
admits a $1$-form $\mu$ such that
\begin{enumerate}
\item[i)] $\mu\wedge (d\mu)^{n-1}
=-\beta\wedge (d\beta)^{n-1}(<0)$\quad and
\item[ii)] the other $(2n-1)$-forms presented 
by products of $\beta, d\beta, \mu$ and $d\mu$ vanish.
\end{enumerate}
Then we see that the $1$-form
$$
\alpha'=\sin^2\frac{\pi \rho}{2}\beta
+\cos^2\frac{\pi \rho}{2}\mu
-\sin (\pi \rho) d\theta
$$
defines the contact structure of the tube 
$(-K^{2n-1})\times (-D^2)$
whose core $-K^{2n-1}$ inherits the contact form $\mu$.
where $(\sqrt{\rho},\theta)$ is the polar coordinates of $D^2$.
We call it the (half) Lutz tube with core $(-K^{2n-1},\ker\mu)$ and
the longitude $(K^{2n-1},\ker\beta)$. 
The core and the longitude span the page of the open-book structure 
$\{\theta=\textrm{const}\}$ of the Lutz tube. 
Thus we can insert it along any contact knot contactomorphic 
to $(K^{2n-1},\ker\beta)$. We call this insertion a (half) Lutz twist. 
\label{Lutz}
\end{prop}

\begin{rem}
1) Proposition \ref{Lutz} has been generalized to 
a better form in \cite{Massot}. 

2) The twice iteration of half Lutz twist can be considered as 
a full Lutz twist. We notice that this is 
(perhaps essentially) different from the other generalization 
of full Lutz twist found by Etnyre and Pancholi in \cite{Etnyre}. 
Their full twist is much
more elaborated and sophisticated than the original full Lutz twist 
even though phenomenally the former shrinks to the latter in $3$-dimensional case. 
  
\end{rem}

\begin{exmp}
In the case where $n=1$, each connected component of $K^1$ is the circle $S^1$
oriented by a non-zero $1$-form $\beta>0$. Then $\mu=-\beta$ satisfies 
the above conditions. The contact form 
$\alpha'=-\cos(\pi \rho)\beta-\sin(\pi \rho)d\theta$ defines the original 
half Lutz tube $\{\rho\le 1\}=S^1\times D^2$. 
(The full Lutz tube is formally $\{\rho\le 2\}$.)
\end{exmp}

\begin{exmp}
Suppose that $TK^3$ ($n=2$) admits a frame $(e_1, e_2, e_3)$ 
with
$$
[e_3, e_2]=e_1,\quad [e_3, e_1]=e_2 \quad\mathrm{and}\quad [e_1,e_2]=0,
$$
that is, $K^3$ is a $Sol$-manifold. 
Then the dual coframe $(\beta,\mu,\psi)$ satisfies
$$
d\beta=\mu\wedge\psi,\quad d\mu=\beta\wedge\psi
\quad\mathrm{and}\quad d\psi=0
$$
Then we see that $\beta$ and $\mu$ satisfies the above conditions.
For the $Sol$-type contact submanifolds in Theorem \ref{main} (or \ref{Kas}), 
we may put
$$
e_1=\frac{1}{2}(a^zv_-+a^{-z}v_+),\quad
e_2=\frac{1}{2}(a^zv_--a^{-z}v_+)\quad\mathrm{and}\quad
e_3=\frac{1}{\log a}\frac{\partial}{\partial z},
$$
in the setting of Example \ref{1.2}. Then we have $\beta=\beta_++\beta_-$ 
and $\mu=\beta_+-\beta_-$.
\label{Solexmp}
\end{exmp}

\begin{exmp} 
Suppose that $TK^3$ ($n=2$) admits a frame $(e_1, e_2, e_3)$ 
with
$$
[e_3, e_2]=e_1,\quad [e_3, e_1]=e_2 \quad\mathrm{and}\quad [e_1,e_2]=e_3,
$$
that is, $K^3$ is a $\widetilde{SL}_2(\R)$-manifold. 
Since the dual coframe $(\beta,\mu,\psi)$ satisfies
$$
d\beta=\mu\wedge\psi,\quad d\mu=\beta\wedge\psi
\quad\mathrm{and}\quad d\psi=\mu\wedge\beta,
$$
we can see that $\beta$ and $\mu$ satisfies the above conditions.
We also have examples of $\widetilde{SL}_2(\R)$-type submanifolds 
in $S^5$. However, since they lack relation with 
codimension one foliations of $S^5$, 
we omit them in this article. 
\end{exmp}

\begin{exmp}
As is shown in Geiges\cite{Geiges}, there is also 
a $(2n+1)$-dimensional solvable Lie group 
for any $n$ which admits 
a pair of left invariant $1$-forms $\beta$ and $\mu$ 
satisfying the above conditions.
Moreover he found an explicit $T^4$-bundle over the circle
which admits such $\beta$ and $\mu$.
That is, we have an example of a seven dimensional Lutz tube.
The author suspects that seven dimensional Lutz twists enable us 
to change not only the contact structure but also 
the homotopy class of the almost contact structure
of a given contact $7$-manifold. 
See Question 5.5 in Etnyre-Pancholi \cite{Etnyre}. 
(See \cite{Kasuya} snd \cite{Massot} for subsequent developments.)
\end{exmp}

\begin{rem} Once we can perform a Lutz twist, we can iterate 
it any number of times by switching the roles of $\beta$ and $\mu$. 
We can also generalize Giroux twist to the insertion 
of even number of unbinded Lutz tubes $I\times S^1\times K^{2n-1}$ 
along a family $S^1\times K^{2n-1}$ of twistable knots. 
Note that the original Giroux twist is the insertion of
the toric annulus $[0, 2m] \times T^2 \ni (\rho, \theta, \varphi)$ with 
the ``propeller'' contact structure defined by  
$\ker(-\cos(\pi \rho)\varphi-\sin(\pi \rho)d\theta)$  
along a pre-Lagrangian torus in $M^3$. 
\end{rem}

\subsection{Plastikstufes in Lutz tubes}
In this subsection, we show that 
the Lutz tubes of Example \ref{Solexmp} contain plastikstufes. 
First we fix the model.

\begin{defn}
Let $(M^{2n+1},\ker\alpha)$ be a contact manifold, $L^{n-1}$ a closed manifold, 
and $\iota : D^2 \times L^{n-1}\to M^{2n+1}$ an embedding such that the restriction 
$\iota|(\partial D^2 \times L^{n-1})$ is Legendrian. 
Then the image $\iota(D^2\times L^{n-1})$  
is called a plastikstufe if 
$$
\iota^\ast(\alpha)\wedge\{f(\rho)d\rho -\rho d\theta\}= 0,\quad 
\lim_{\rho \to 0}\frac{f(\rho)}{\rho}=0\quad\textrm{and}
\quad\lim_{\rho \to 1}|f(\rho)|=\infty
$$
holds for some functions $h$ and $f(\rho)$ ($(\sqrt{\rho},\theta)\in D^2$). 
See Figure \ref{OF}. 
\label{plastikstufe}
\end{defn}

\begin{figure}[ht]
\centering
\includegraphics{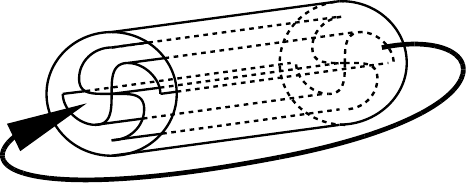}
\caption{A plastikstufe $D^2\times S^1$ in $(M^5,\ker\alpha)$.}
\label{OF}
\end{figure}

Niederkr\"uger and Chekanov introduced this notion and proved 

\begin{thm}[\cite{Niederkruger}]
The contact-type boundary of a compact semipositive symplectic manifold
contains no plastikstufe.
\label{PSOT}
\end{thm}

Then we prove

\begin{thm}
Each of the Lutz tubes given in the above Example \ref{Solexmp}
contains a plastikstufe. Thus we can perform a Lutz twist along each of 
the $Sol$-type contact submanifolds of $S^5$ realized as the singularity links 
in Theorem \ref{main} (or \ref{Kas}) to obtain an exotic contact structure of $S^5$. 
\label{exotic}
\end{thm}

\begin{rem}
This theorem has been generalized and improved to a satisfactory form 
in \cite{Massot}. The next proof is now just an explicit example.
\end{rem}

\proof
In the tube $T_A\times D^2\ni ((p, q, z), (\sqrt{\rho}, \theta))$, with the contact form
$$
\alpha'=\beta_+-\cos(\pi \rho)\beta_--\sin(\pi \rho)d\theta
= a^{-z}dq-\cos(\pi \rho)a^zdp-\sin(\pi \rho)d\theta,
$$
we can take the plastikstufe 
$$
P=\{p=\e a^{-z}g(\pi\rho), q=-\e a^z \cos(\pi \rho)g(\pi\rho)\}
\subset T_A\times D^2
$$
where $\e>0$ is a small constant and $g(\pi\rho)$ a function of $\pi\rho$ such that 
$$
g(\pi\rho)=\frac{1}{\sqrt{\cos(\pi\rho)}}\,\,\,(\pi\rho\ll1) 
\textrm{\quad and \quad} 
g(\pi\rho)=\sin(\pi\rho)\,\,\,(\pi\rho\approx\pi).
$$ 
Indeed the restriction of $\alpha'$ to $P$ is
\begin{eqnarray*}
\alpha' | P &=& \e\pi\{ \sin(\pi \rho)g(\pi\rho)
-2\e\cos(\pi \rho)g'(\pi \rho)\}d\rho-\sin(\pi \rho)d\theta\\
&=&
\frac{\sin(\pi\rho)}{\rho}\left\{\e(\pi\rho g(\pi\rho)-2\pi\rho \cot(\pi\rho)g'(\pi\rho))d\rho-\rho d\theta\right\}
\end{eqnarray*}
Then $f(\rho)=\e(\pi\rho g(\pi\rho)-2\pi\rho \cot(\pi\rho)g'(\pi\rho))$ 
satisfies 
$$
\frac{f(\rho)}{\rho}=0\,\,\,(\rho\ll1)
\textrm{\quad and \quad}
\lim_{\rho\to 1}f(\rho)
=\lim_{\rho\to 1}\left(3\sin(\pi\rho)-\frac{2}{\sin(\pi\rho)}\right)
=-\infty.
$$
\qed
\begin{rem}
As $\e\to 0$, the above plastikstufe converges 
to a solid torus $S^1\times D^2$ 
foliated by $S^1$ times the straight rays on $D^2$, i.e., the leaves are 
$\{\theta=\mathrm{const}\}$. Note that this solid torus is the preimage of 
the closed orbit $\{p=0,q=0\}$ of the suspension 
Anosov flow $((x,y),z)\mapsto((x,y),z+t)$ 
on the core $T_{A^{-1}}$ under the natural projection. 
This is called an overtwisted family in \cite{Etnyre}. 
\end{rem}

%%%%%%%%%%%%%%%%%%%%%%%%%%%%%%%%%%%%%%%%%%%%%%%%%%%%%%%%%%%%%%%%%%%%%%%%SECT6
\section{Violation of the inequality and generalized Reeb components}

\subsection{Milnor fibrations as supporting open-book structures}

We can see that the exotic contact structure of $S^5$ constructed 
in Theorem \ref{exotic} is convex-overtwisted 
from the following lemma essentially contained in the note 
\cite{GirouxNote} of Giroux.

\begin{lem}[see \cite{GirouxNote}]
The Milnor tube of an isolated singularity $(0,\dots,0)\in\C^{n+1}$ of complex hypersurface
determines an isotopy class of contact structures of $S^{2n+1}$ via 
the exact symplectic open-book structure associated to it.
Moreover the isotopy class is represented by the standard contact structure of $S^{2n+1}$.
\label{sfl}
\end{lem}

\begin{proof}[Sketch of the proof]
On the small ball $B_\e=\{|z_0|^2+\dots+|z_{n+1}|^2\le \e^2\}$
in $\C^{n+2}$, we consider the graph $G_k$ of the 
function $z_{n+1}=kf(z_0,\dots,z_n)$.
From the Gray stability, the contact structure of $G_k$ 
is isotopic to that of the standard $S^{2n+1}=\Gamma_0$. 
Writing $z_{n+1}$ as $x_{n+1}+\sqrt{-1}y_{n+1}$, we see from the obvious inequality  
$$
(-y_{n+1}dx_{n+1}+x_{n+1}dy_{n+1})
(-y_{n+1}\partial/\partial x_{i+1}+x_{n+1}\partial/\partial y_{n+1})\ge 0,
$$
and $dz_{n+1}|\Sigma_\infty=0$, that  $\arg(f|\partial \Sigma_k)$ 
defines a supporting open-book structure of $\partial G_k$ 
equivalent to the Milnor tube 
$\{\{f=\delta\}\cap B_\e\}_{|\delta|=\e'}$ 
if $k$ is sufficiently large and $\e'>0$ is sufficiently small. 
Indeed, we can construct a contact vector field on $\partial G_k$ 
which is close to the rotation vector field 
$-y_{n+1}\partial/\partial x_{i+1}+x_{n+1}\partial/\partial y_{n+1}$ 
and transverse to 
the contact structure as well as to the pages $\{\arg f|G_k=\textrm{const}\}$.
\end{proof}

\begin{thm}
1) Suppose that the Euler characteristic of the page 
$\{\theta=\textrm{const}\}$ of an exact symplectic open-book structure
$\theta:M^{2n+1}\setminus N^{2n-1}\to S^1$ is positive, and 
each connected component of the contact submanifold $N^{2n-1}$ 
is twistable. Then we can insert the Lutz tube
along $N^{2n-1}$ to obtain a new contact structure
which is convex-overtwisted. 
Indeed since we reversed the orientation of pages and added 
positive collars, the pages are convex overtwisted hypersurfaces. 

2) Especially each exotic contact structure of $S^5$ constructed in 
Theorem \ref{exotic} is convex-overtwisted. 
(The Milnor fiber is homotopically a bouquet of $2$-spheres.)
\end{thm}

\subsection{Generalized Reeb components}

Putting $\nu=d\psi$ in Example \ref{Solexmp}, 
we see that the open-book structures associated 
to the Milnor fibrations of the singularities in 
Theorem \ref{main} (or Theorem \ref{Kas}) 
satisfy the condition of Theorem \ref{convergence}.
Let $\F$ denote the limit foliation of a deformation of 
the standard contact structure $\ker\alpha$ associated 
to a $Sol$-type contact submanifold $N^3$.   

\begin{thm}
Let $\ker\alpha'$ be the exotic convex-overtwisted 
contact structure obtained by inserting the Lutz 
tube with core $(-N^3,\mu)$ along the above 
$Sol$-type submanifold $(N^3,\beta)\subset(S^5,\ker\alpha)$.
Then we can deform the exotic contact structure $\ker\alpha$ 
via contact structures to a foliation $\F '$ with 
two compact leaves which are parallel to each other. 
The foliation $\F'$ can be obtained by 
cutting and turbulizing the above foliation $\F$ 
along the boundary $\partial U$ of a regular neighborhood $U$ 
of the Reeb component of $\F$. Then 
$\partial U\approx N^3\times S^1$ becomes
a compact leaf.
\label{turbulization}
\end{thm}

\noindent
We can prove this theorem by combining Theorem \ref{convergence} with 
the following convergence of contact structures to generalized Reeb components.

\begin{thm} (Convergence to generalized Reeb components)
Let $\widetilde{\alpha}$ be the contact form of 
the unified contactization 
$(\Sigma_+\cup(-\Sigma_-))\times\R$ in Definition \ref{unified}. 
Take a diffeomorphism $\varphi$ and a positive function $h$ 
which satisfies the conditions described in Theorem \ref{openbook2} 
except that in this case $\partial\Sigma=\emptyset$. 
Then the mapping torus $M^{2n+1}=\Sigma\times\R/(x,z+h)\sim(\varphi(x),z)$ 
possesses the contact form $\alpha_0$ induced from $\widetilde{\alpha}$.
Then there exists a family 
$\{\alpha_t\}_{0\le t<1}$ of contact forms which starts from $\alpha_0$ 
and converges to a defining $1$-form $\alpha_1$ of a codimension one foliation $\F$. 
Here $\F$ coincides with the level foliation of $h$ except near the compact leaves
$\Gamma\times\R/\Z$ into which the level foliation spiral.  
Moreover, we may assume that the compact leaves decomposes 
the foliation into the union of dead-end components, i.e., generalized Reeb components. 
\end{thm}

\proof
On a neighborhood 
$(-\e',\e')\times \Gamma\times \R/\Z (\subset M^{2n+1})$ 
of the hypersurface $\Gamma\times \R/\Z$, 
the contact form $\alpha_0$ is expressed as
$$
\displaystyle\alpha_0=e^{-s^2}\beta+g(s)dz
\quad (s\in (-\e',\e')),
$$
where $\beta$ is the contact form on $\Gamma$ and 
$g(s)$ is a decreasing function which coincides with $-s$ near $s=0$
and which is smoothly tangent to $\mp 1$ at $s=\pm\e'$. 

Put $\tau=(1-t)^2$ and take 
a function $e(s)$ supported in $(-\e',\e')$ with $e(0)>0$.
Then the family of contact forms
$$
\alpha_t=\tau\alpha_0+(1-\tau)g(s)dz+(1-\tau)e(s)ds\quad (0\le t<1)
$$
converges to $\alpha_1$which defines the foliation $\F$ described in the theorem. \qed

\begin{rem}
We can change simultaneously the orientation of 
the compact leaves by changing the sign of the value $e(0)$ totally. 
However in order to obtain the dead-end components we
can not change it partially.  
\end{rem}

\subsection{Topology of the pages}
In this subsection, we decide the Euler characteristic of the Milnor fiber 
of each singularity in Theorem \ref{main}. Note that this is equal to 
that of the corresponding singularity in Theorem \ref{Kas} 
which is well known (see \cite{Kasuya}). Thus the followings calculation 
has become just an alternative approach.   
  
The Milnor fiber is diffeomorphic to
$$F=\{f_{m,k}(\xi,\eta,\zeta)=\delta\}\cap 
\{|\xi|^2+|\eta|^2+|\zeta|^2\le \e\},$$ 
where $\delta \in\C$ and $0<|\delta|\ll \e \ll 1$. 
Note that the Euler characteristic $\chi(F)$ is equal to $\mu(f_{m,k},(0,0,0))+1$ 
where $\mu()$ denotes the Milnor number of the function germ.
Let $\pi_\xi,\pi_\eta$ and $\pi_\zeta$ 
denote the projections to the axes.

In the case where $m=1$, the critical values of $\pi_\zeta|F$ are 
the solutions of the following system:
$$f_{1,(k_1)}-\delta=0,\quad
\frac{\partial}{\partial \xi} f_{1,(k_1)}=2\xi=0,\quad 
\frac{\partial}{\partial \eta} f_{1,(k_1)}=0,\quad
\textrm{and} \quad |\zeta|\ll \e.$$ 
Therefore, for each critical value $\zeta$ of $\pi_\zeta|F$, 
we have the factorization
$$(\eta-2\zeta^2)(\eta^2+2\zeta^2\eta+\zeta^4-\zeta^{4+k_1})-\delta = (\eta-a)^2(\eta+2a)$$
of the polynomial of $\eta$, where the parameter $a \in\C$ depends on $\zeta$. 
By comparison of the coefficients of the $\eta^1$-terms and the $\eta^0$-terms we have
$$-4\zeta^4+\zeta^4-\zeta^{4+k_1}=a^2-2a^2-2a^2
\quad \textrm{and}\quad -2\zeta^6+2\zeta^{6+k_1}-\delta=2a^3.$$
Eliminating the parameter $a$, we obtain the equation
$$4\zeta^{12+k_1}(9-\zeta^{k_1})^2=
108\zeta^6(1-\zeta^{k_1})\delta+27\delta^2.$$
Then we see that $\pi_\zeta|F$ has $12+k_1$ critical points, 
which indeed satisfy $a\ne -2a$, i.e., 
the map $\pi_\zeta|F$ defines a Lefschetz fibration $F\to D^2$ 
with $12+k_1$ singular fibers. 
Thus we have $$\chi(F)=1-2+12+k_1=11+k_1.$$

In the case where $m=2$, we have the factorization
$$\{(\zeta+\eta)^2-\zeta^{2+k_1}\}\{(\zeta-\eta)^2+\zeta^{2+k_2}\}-\delta = 
(\eta-a)^2(\eta+a-b)(\eta+a+b).$$
By comparison of the coefficients we have
$$
\left\{
\begin{array}{l}
\zeta^2(2+\zeta^{k_1}-\zeta^{k_2})=2a^2+b^2\\
\zeta^3(\zeta^{k_1}+\zeta^{k_2})=ab^2\\
\zeta^4(1-\zeta^{k_1})(1+\zeta^{k_2})-\delta=a^2(a^2-b^2)
\end{array}
\right..
$$
In order to eliminate $a,b$, we put $a=u+v$ and $\zeta^2(2+\zeta^{k_1}-\zeta^{k_2})=6uv$. 
Then we have
$$
\left\{
\begin{array}{l}
6uv-2(u+v)^2=b^2\\
\zeta^3(\zeta^{k_1}+\zeta^{k_2})=-2(u^3+v^3)\\
\zeta^4(1-\zeta^{k_1})(1+\zeta^{k_2})-\delta=(u+v)^4+2(u^3+v^3)(u+v)
\end{array}
\right..
$$
Following Cardano's method, we put 
$$p=uv,\quad q=u^3+:v^3\quad \textrm{and}\quad 
r=(u+v)^4+2(u^3+v^3)(u+v).$$
Then $p,q$ and $r$ are polynomials of $\zeta$. 
Eliminating $a$ from 
$$q(=q(p,a))=3pa-a^3\quad \textrm{and}\quad r(=r(p,a))=-6pa^2+3a^4,$$
we obtain 
$$
(27q^4-r^3)+54(prq^2-p^3q^2)+18p^2r^2-81p^4r=0,
$$
which is a polynomial equation of $\zeta$. As $\delta\to0$, the left hand side converges to
$$
\zeta^{12+k_1+k_2}
\left\{1-\frac{\zeta^{k_1}-\zeta^{k_2}}{2}+\frac{(\zeta^{k_1}+\zeta^{k_2})^2}{16}
\right\}^2.
$$
Therefore $\pi_\zeta|F$ has $12+k_1+k_2$ critical points, 
which indeed satisfy $4a^2\ne b^2$ and $b\ne 0$, i.e., 
the map $\pi_\zeta|F$ defines a Lefschetz fibration $F\to D^2$ 
with $12+k_1+k_2$ singular fibers. 
Thus we have $$\chi(F)=1-3+12+k_1+k_2=10+k_1+k_2.$$

\subsection{Open problems}
At present, the following natural questions are open. 

\begin{pbm}
1) Is there any convex-tight contact structure of dimension $>3$ ?

2) When is a Lutz twisted contact structure convex-overtwisted ?

3) Are the next to standard contact structures of $S^{2n+1}$ convex-overtwisted ? 

4) Is there any relation between our half Lutz twist and
the full Lutz twist of Etnyre and Pancholi ? 
\label{Q}
\end{pbm}

The next problem can be considered as a variation of Calabi's question.

\begin{pbm}
Does the standard $S^{2n+1}$ ($n>1$) contains a convex hypersurface 
with disconnected contact-type boundary?
\end{pbm}

\noindent
If there is no such hypersurfaces, the following conjecture trivially holds.

\begin{conj}[As an affirmative answer for the above 1)] 
$S^{2n+1}$ is convex-tight. 
\end{conj}

\section*{Acknowledgement}
The author would like to cordially thank John Etnyre, Naohiko Kasuya 
and Klaus Niederkr\"uger for encouraging the author who 
had almost gave up publishing the results of this article.
He would also like to thank Yoshihiko Mitsumatsu for helpful comments
especially for giving an idea on Lutz twists along 
Brieskorn $\widetilde{SL}_2(\R)$-type contact submanifold 
which will be discussed in another place.

%%%%%%%%%%%%%%%%%%%%%%%%%%%%%%%%%%%%%%%%%%%%%%%%%%%%%%%%%%%%%%%%APPENDIX
\section*{Appendix: On the violation of Thurston-Bennequin inequality 
for a certain non-convex hypersurface}

In this short note, we give an example of  arbitrary small hypersurface 
with contact-type boundary in a Darboux chart of dimension greater than three 
which violates the Thurston-Bennequin inequality. We also confirm 
its non-convexity. 

Let $(r,\theta,z)$ be the cylindrical coordinates of $\R^3$, and take the functions
$$
\lambda(r)=2r^2-1 \quad 
\mathrm{and} \quad
\mu(r)=r^2(r^2-1).
$$
Then the contact form
$$
\beta=\lambda(r)dz+\mu(r)d\theta
$$
defines a contact structure with the overtwisted disk $D_{r\le 1}^2\times \{0\}$. 
Let $U$ denote a small neighborhood of $D_{r\le 1}^2\times\{0\}$.  
Since even a $3$-ball in a Darboux chart (shortly a Darboux $3$-ball) 
contains immersed overtwisted disks, it is easy to see that a Darboux $5$-ball 
contains an embedded overtwisted disk. 
Thus we can embed the product 
$U\times (D_{<\e}^2)^{n-1}$ equipped with the contact form 
$\displaystyle \alpha=\beta+\sum_{i=1}^{n-1}\left(x_idy_i-y_idx_i\right)$   
into a Darboux $(2n+1)$-ball, where $D_{<\e}^2=\{x_i^2+y_i^2<\e^2\}$ 
are small disks on the $x_i+\sqrt{-1}y_i$-axes. (Note that 
$\displaystyle f^2\sum_{i=1}^{n-1}(x_idy_i-y_idx_i)
=\sum_{i=1}^{n-1}(fx_id(fy_i)-fy_id(fx_i))$ 
holds for any function $f$.) Now we take the hypersurfaces
$$
\widetilde{\Sigma}=\left\{
r^2+\e^{-2}\left(z^2+\sum_{i=1}^{n-1}(x_i^2+y_i^2)\right)=1+\e
\right\}\quad \textrm{and}\quad
\Sigma=\left\{r-z\le 1\right\}\cap \widetilde{\Sigma}.
$$ 
The characteristic foliation $\F_{\widetilde{\Sigma}}$ 
is presented by the vector field
\begin{eqnarray*}
X
&=& \displaystyle
\e^{-2}r(r^2-1)z\partial_r
+(1+2\e-2\e^{-2}z^2)\partial_\theta
\\
&& \displaystyle
+\left\{
(r^2-1)^2+(2r^2-1)(\e^{-2}z^2-\e)
\right\}\partial_z
\\
&& \displaystyle
+\,\e^{-2}(2r^2-1)z\sum_{i=1}^{n-1}
\left(
x_i\partial_{x_i}+y_i\partial_{y_i}
\right)
\\
&& \displaystyle
+\e^{-2}(2r^4-2r^2+1)\sum_{i=1}^{n-1}
\left(
-y_i\partial_{x_i}+x_i\partial_{y_i}
\right).
\end{eqnarray*}
Indeed the following calculations shows that 
the vector field $X$ satisfies $X \in T\widetilde{\Sigma}$,
$(\alpha|T\widetilde{\Sigma})(X)=0$, and 
$\mathcal{L}_X (\alpha|T\widetilde{\Sigma})
=2\e^{-2}(2r^2-1)z\alpha|T\widetilde{\Sigma}$.
\begin{eqnarray*}
&&\hspace{-20mm}
\displaystyle
\left\{
2rdr+\e^{-2}\left(2zdz+2\sum_{i=1}^{n-1}(x_idx_i+y_idy_i)\right)
\right\}
(X)
\\
&=&\displaystyle
2\e^{-2}(2r^2-1)z
\left\{
r^2+\e^{-2}
\left(
z^2+\sum_{i=1}^{n-1}(x_i^2+y_i^2)
\right)-1-\e
\right\},
\\
\alpha
&=& \displaystyle
(2r^2-1)dz+r^2(r^2-1)d\theta+\sum_{i=1}^{n-1}\left(x_idy_i-y_idx_i\right),
\\
\alpha(X)
&=& \displaystyle
(2r^2-1)\left\{
(r^2-1)^2+(2r^2-1)(\e^{-2}z^2-\e)
\right\}
\\ 
&& \displaystyle
+r^2(r^2-1)\{1-2(\e^{-2}z^2-\e)\}+\e^{-2}(2r^4-2r^2+1)\sum_{i=1}^{n-1}(x_i^2+y_i^2)
\\ 
&& \displaystyle
=(2r^4-2r^2+1)
\left\{
r^2+\e^{-2}
\left(
z^2+\sum_{i=1}^{n-1}(x_i^2+y_i^2)
\right)-1-\e
\right\},\\
d\alpha 
&=& \displaystyle
4rdr\wedge dz+2r(2r^2-1)dr\wedge d\theta+2\sum_{i=1}^{n-1}dx_i\wedge dy_i,
\\
\iota_Xd\alpha
&=& \displaystyle 
\e^{-2}r(r^2-1)z(4rdz+2r(2r^2-1)d\theta)-2r(2r^4-2r^2+1)dr
\\
&& \displaystyle
+2\e^{-2}(2r^2-1)z\sum_{i=1}^{n-1}(x_idy_i-y_idx_i)
\\
&& \displaystyle
-2\e^{-2}(2r^4-2r^2+1)\sum_{i=1}^{n-1}(x_idx_i+y_idy_i)
\\ 
&=& \displaystyle
2\e^{-2}(2r^2-1)z\alpha \\
&& \displaystyle
-(2r^4-2r^2+1)
\left\{
2rdr+\e^{-2}
\left(
2zdz+2\sum_{i=1}^{n-1}(x_idx_i+y_idy_i)
\right)
\right\}.
\end{eqnarray*}
Further we see that the vector field $X$ satisfies
$$
(dr-dz)(X)|\partial\Sigma=(r-1)^2\{\e^{-2}(-r^2+r+1)-(r+1)^2\}+\e(2r^2-1)
>0
$$
with attention to $z=r-1$. 
Thus $\partial \Sigma$ is a contact-type boundary.
 
The singularity of $X|\Sigma$ is the union of the one point set
$$
S_+(\Sigma)=\{((0,\theta,-\e\sqrt{1+\e}),0) \in U\times B_{<\e}^{2n-2}\}
$$
and the other one point set 
$$
S_-(\Sigma)=\{((0,\theta,+\e\sqrt{1+\e}),0) \in U\times B_{<\e}^{2n-2}\}.
$$
They are respectively a source point and a sink point.
Since the indices of these points are equal to $1$, 
we see that the hypersurface $\Sigma$ violates the Thurston-Bennequin inequality.
Figure A depicts (the four-fold covering of)
the well-defined push-forward $X'$ of $X$ under the natural projection $p$ 
from $\widetilde{\Sigma}$ to the quarter-sphere 
$$
\Sigma'=\left\{(z,r,|(x,y)|) \mid r^2+\e^{-2}(z^2+|(x,y)|^2)=1+\e\right\} \quad (r\ge 0, \, |(x,y)|\ge 0).
$$

\begin{figure}[h]
\centering
\includegraphics{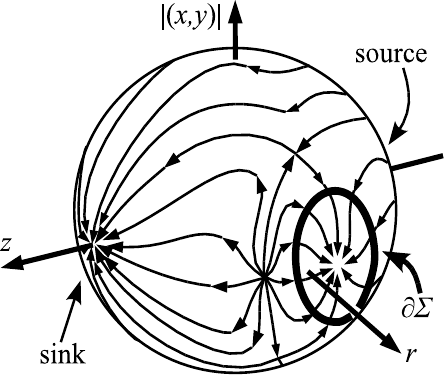}
\begin{center}
{\bf Fifure A.}\quad The covering of the vector field $X'$
\end{center}
\end{figure}

The vector field $X'$ on the quoter $2$-sphere defines the singular foliation
$$
\F'=\{\e^{-2}z^2=(Cr^2-1)(r^2-1)+\e\}_{-\infty \le C \le +\infty}.
$$
The singularity consists of the following five points; two (quarter-)elliptic points
$\left(\mp\e\sqrt{1+\e},0,0\right)$, whose preimages under $p$ are the above singular points;
other two (half-)elliptic points $\left(\pm\e\sqrt{\e},1,0\right)$, whose preimages are the periodic orbits 
$P_\pm$ of $X$; 
and a single hyperbolic point 
$\left(0,\sqrt{1+\e-\sqrt{\e(1+\e)}},\sqrt{\e^2\sqrt{\e(1+\e)}}\right)$
presenting the double point of the singular level $C=1+2\e+2\sqrt{\e(1+\e)}$.
Slightly changing the small positive constant $\e$ if necessary, 
we may assume that the preimage $H$
of this hyperbolic point ($\approx S^1\times S^{2n-1}$) is the union of 
periodic orbits of $X$. 

Now we assume that $\widetilde{\Sigma}$ is convex. 
Then it is divided along a contact submanifold $\Gamma$ 
into the positive region $\widetilde{\Sigma}_+$ 
and the negative region $\widetilde{\Sigma}_-$. 
Moreover $X$ is transverse to $\Gamma$ from positive region to negative region,
and there the sign of the natural divergence of $X$ changes from positive to negative.
This implies that 
$$
S_i(\Sigma), \, P_i \subset \widetilde{\Sigma} \quad (i=+,-)
\quad\textrm{and}\quad
\Gamma \cap H=\emptyset.
$$
(Note that if $\widetilde \Sigma$ itself is not convex but becomes convex 
after a small perturbation, the same properties also hold.)  
Then we can see that the dividing set $\Gamma$ must contain 
a spherical component. This contradicts to 
the famous Eliashberg-Floer-McDuff theorem, 
which says that $S^{2n-1}\coprod$(other components) 
can not be realized as the contact-type boundary 
of a connected symplectic manifold. 
This proves that $\widetilde{\Sigma}$ is not convex.  
Similarly we can prove the non-convexity of $\Sigma$. 
\end{document}